\newcommand{\ff}{\mathbb F}
\newcommand{\mc}{\mathcal}
\newcommand{\matt}[1]{\left[\begin{smallmatrix}
 #1 \end{smallmatrix}\right]}
\newcommand{\mat}[1]{\begin{bmatrix}
 #1  \end{bmatrix}}
\newcommand{\arr}[1]{\left[\begin{array}
 #1\end{array}\right]}
\newtheorem{theorem}{Theorem}
\newtheorem{lemma}{Lemma}
\theoremstyle{remark}
\newtheorem{remark}{Remark}
\begin{document}

\title{Reduction of a pair of skew-symmetric matrices to its canonical form under congruence}

\author[bov]{Victor A.~Bovdi\corref{cor}} \ead{vbovdi@gmail.com,  v.bodi@uaeu.ac.ae}
\address[bov]{United Arab Emirates University, Al Ain, UAE}

\author[ser]{Tatiana G.~Gerasimova} \ead{gerasimova@imath.kiev.ua}

\author[bov]{Mohamed A.~Salim}
\ead{msalim@uaeu.ac.ae}

\author[ser]{Vladimir~V.~Sergeichuk}
\ead{sergeich@imath.kiev.ua}
\address[ser]{Institute of Mathematics, Tereshchenkivska 3,
Kiev, Ukraine}

\cortext[cor]{Corresponding author.}

\begin{abstract}
Let $(A,B)$ be a pair of skew-symmetric matrices over a field of characteristic not 2. Its regularization decomposition is a direct sum
\[
(\underline{\underline A},\underline{\underline B})\oplus (A_1,B_1)\oplus\dots\oplus(A_t,B_t)
\]
that is congruent to $(A,B)$,
in which $(\underline{\underline A},\underline{\underline B})$ is a pair of nonsingular matrices and $(A_1,B_1),\dots,(A_t,B_t)$ are singular indecomposable canonical pairs of skew-symmetric matrices under congruence.
We give an algorithm that constructs a regularization decomposition. We also give a constructive proof of the known canonical form of $(A,B)$ under  congruence over an algebraically closed field of characteristic not 2.
\end{abstract}

\begin{keyword}
Pair of skew-symmetric matrices; regularization decomposition; canonical form

\MSC 15A21; 15A22; 15A63; 51A50
\end{keyword}

\maketitle

\section{Introduction}

We give an algorithm that for each pair of skew-symmetric matrices constructs its regularization decomposition.

Two pairs $(A,B)$ and $(A',B')$ of square matrices of the same size are \emph{congruent} if there exists a nonsingular matrix $S$ such that
\[
S(A,B)S^T:=(SAS^T,SBS^T)=(A',B').
\]
A \emph{direct sum} of pairs $(A,B)$ and $(A',B')$ is the pair
\[
(A,B)\oplus(A',B'):=\left(\mat{A&0\\0&A'}, \mat{B&0\\0&B'}\right).
\]
A \emph{regularizing decomposition} of a pair $(A,B)$ of skew-symmetric matrices over a field of characteristic not 2 is a direct sum
\begin{equation}\label{hjw}
(\underline{\underline A},\underline{\underline B})\oplus (A_1,B_1)\oplus\dots\oplus(A_t,B_t)
\end{equation}
that is congruent to $(A,B)$,
in which $(\underline{\underline A},\underline{\underline B})$ is a pair of nonsingular matrices of the same size and each $(A_i,B_i)$ is one of the pairs
\begin{align}\label{a1}
\mc J_n:=&
\left(\mat{0&I_n\\-I_n&0},\mat{0&J_n(0) \\-J_n(0)^T &0}\right),\\
\nonumber
\mc K_n:=&\left(\mat{0&J_n(0) \\-J_n(0)^T &0},\mat{0&I_n\\-I_n&0}\right),\\
\label{a3}
\mc L_n:=&\left(\mat{0&L_n\\-L_n^T&0},\mat{0&R_n \\-R_n^T &0}\right),\quad n=1,2,\dots,
\end{align}
where $J_n(0)$ is the $n\times n$ singular Jordan block and
\begin{equation}\label{sdw}
 L_n:=\mat{1&0&&0\\&\ddots&\ddots\\0&&1&0},\quad
 R_n:=\mat{0&1&&0\\&\ddots&\ddots\\0&&0&1}\qquad
((n-1)\text{-by-}n).
\end{equation}
In particular, $\mc L_1=([0],[0])$.
The canonical form of $(A,B)$ under congruence (see \eqref{a4}) ensures that
$(\underline{\underline A},\underline{\underline B})$---the \emph{regular part} of $(A,B)$---is determined up to congruence, and $(A_1,B_1),\dots,(A_t,B_t)$---the \emph{singular summands}---are determined uniquely up to permutations.

In Section \ref{ss2}, we give a \emph{regularization algorithm} that uses elementary transformations of matrices and for each pair of skew-symmetric matrices over a field of characteristic not 2 constructs its regularization decomposition under congruence.
Regularization algorithms were constructed for matrix pencils by Van Dooren \cite{van_door}, for cycles of linear mappings by Sergeichuk \cite{ser_cycl} and Varga \cite{var}, and for square matrices under congruence and *congruence by Horn and Sergeichuk \cite{horn-ser}.

The regularization decomposition \eqref{hjw} is  the first step towards the reduction of $(A,B)$ to its canonical form under congruence (see Theorem \ref{ttt} in Section \ref{ssw}): each pair of skew-symmetric matrices over an algebraically closed field $\ff$ of characteristic not 2 is congruent to a direct sum, determined uniquely up to permutation of summands, of pairs of the form
\begin{equation}\label{a4}
\mc J_n(\lambda) :=
\left(\mat{0&I_n\\-I_n&0},\mat{0&J_n(\lambda) \\-J_n(\lambda)^T &0}\right)\ (\lambda \in \ff),
\quad \mc K_n,\quad \mc L_n,
\end{equation}
where
\begin{equation*}\label{jjy}
J_n(\lambda ):=\mat{\lambda &&&0\\1&\lambda\\[-2mm] &\ddots&\ddots\\[-2mm]0&&1&\lambda}\qquad
(n\text{-by-}n).
\end{equation*}
If $\ff$ is not algebraically closed, then $J_n(\lambda )$ in \eqref{a4}  is replaced by any  indecomposable canonical matrix for similarity; for example, $J_n(\lambda )$
can be replaced by a
Frobenius block
\begin{equation*}\label{3a}
\begin{bmatrix} 0&&
0&-c_n\\[-2mm]1&\ddots&&\vdots
\\[-3mm]&\ddots&0&-c_2\\
0&&1& -c_1 \end{bmatrix},
\end{equation*}
in which $p(x)^\ell=x^n+c_1
x^{n-1}+\dots+ c_n$ is an
integer power of a polynomial
$p(x)$ that is irreducible over
$\mathbb F$. This canonical form of pairs of skew-symmetric matrices under congruence was given by Scharlau \cite{sch} in terms of Kronecker's modules; see also \cite{ser1,thom}.

In Section \ref{ssw}, we give another proof of the canonical form of a pair of skew-symmetric matrices over an algebraically closed field based on the regularization algorithm from Section \ref{ss2}.

Dmytryshyn and K\r{a}gstr\"{o}m \cite{dmy,dm-k} construct miniversal deformations of a pair of skew-symmetric matrices $(A,B)$ under congruence and study how small perturbations of $A$ and $B$ change the canonical form of  $(A,B)$ under congruence.

\section{Regularization algorithm for a pair of skew-symmetric matrices}\label{ss2}

We consider only matrix pairs in which both the matrices have the same size. All transformations that we make with matrix pairs in this section are congruence transformations. Thus,
\begin{equation}\label{ksw}
\parbox[c]{0.8\textwidth}{when we write that we make an elementary transformation of rows (columns) of one matrix from a pair,  it means that we also make the same elementary transformation  of rows (respectively, columns) of the other matrix, and then the same elementary transformation of columns (respectively, rows) of both matrices.
}
\end{equation}

A \emph{semi-regularization decomposition} of a pair $(A,B)$ of skew-symmetric matrices is a direct sum
\begin{equation*}\label{hjw1}
({\underline A},{\underline B})\oplus (A_1,B_1)\oplus\dots\oplus(A_t,B_t)
\end{equation*}
that is congruent to $(A,B)$,
in which $\underline A$ is a nonsingular matrix and each $(A_i,B_i)$ is of the form $\mc J_n$ or $\mc L_n$ (see \eqref{a1} and \eqref{a3}).

In this section, we give an algorithm that constructs a regularization decomposition of a pair $(A,B)$ of skew-symmetric matrices over a field $\ff$ of characteristic not 2. For this purpose, it is enough to give an algorithm that constructs a semi-regularization decomposition since if \eqref{hjw} is a semi-regularization decomposition of $(A,B)$ and
\[
(\underline{\underline B},\underline{\underline A})\oplus (B'_1,A'_1)\oplus\dots\oplus(B'_s,A'_s)
\]
is a semi-regularization decomposition of $(\underline B,\underline A)$ (and hence each $(B'_i,A'_i)$ is of the form $\mc J_n$), then
\[
(\underline{\underline A},\underline{\underline B})\oplus (A'_1,B'_1)\oplus\dots\oplus(A'_s,B'_s)
\oplus (A_1,B_1)\oplus\dots\oplus(A_t,B_t)
\]
is a regularization decomposition of $(A,B)$.

We suppose that $A$ has been reduced to its canonical form for congruence; that is,
\begin{equation}\label{fey}
(A,B)=\left(\arr{{c|c|c} 0&I&0\\\hline -I&0&0\\\hline0&0&0},
\arr{{c|c|c}
B_{11}&B_{12}&B_{13}\\\hline B_{21}&B_{22}&B_{23}\\\hline
B_{21}&B_{22}&B_{23}}
\right)
\end{equation}
and further
we use only those congruence transformations $S(A,B)S^T$ that preserve $A$; i.e, for which $SAS^T=A$.

For example, we can take $S=R\oplus R^{-T}\oplus I$, in which $R$ is a nonsingular matrix. If $R$ is an elementary matrix, then we obtain transformation (i) from the next paragraph. For example, we can add row $i1$ multiplied by $a\in\ff$ to row $j1$ (``row $il$''  means the $i$th row of $l$th horizontal strip in \eqref{fey}) and make the same transformation of columns. These transformations spoil blocks $(1,2)$ and $(2,1)$ of $A$; we restore them by subtracting column $j2$ multiplied by $a$ from column $i2$ and making the same transformation of rows.

The following row transformations that are coupled with the same column transformations (see \eqref{ksw}) do not change $A$:
\begin{itemize}
  \item[(i)] \begin{itemize}
               \item
An elementary row transformation in the first horizontal strip and the inverse row transformation in the second horizontal strip.
\item
An elementary row transformation in the third horizontal strip.
\end{itemize}

  \item[(ii)]
\begin{itemize}
  \item
Add row $i1$ multiplied by $a\in\ff$ to row $j2$ with $j\ne i$, then add row $j1$ multiplied by $a$ to row $i2$.

  \item
 Add row $i1$ multiplied by $a\in\ff$ to row $i2$.
\end{itemize}

  \item[(iii)]
\begin{itemize}
  \item
Add row $i2$ multiplied by $a\in\ff$ to row $j1$ with $j\ne i$, then add row $j2$ multiplied by $a$ to row $i1$.
  \item
Add row $i2$ multiplied by $a\in\ff$ to row $i1$.
\end{itemize}

  \item[(iv)] Multiply row $i1$ by $-1$, then interchange it with row $i2$.

   \item[(v)] Add row $i3$ multiplied by $a\in\ff$ to a row in strip 2 or 3.
\end{itemize}

In each step of the following algorithm, we reduce $(A,B)$ of the form \eqref{fey} by transformations (i)--(v) to a direct sum, in which some of direct summands are of the form $\mc K_n$ or $\mc L_n$, and delete these summands. The algorithm stops when we obtain a pair $(\underline A,\underline B)$ with a nonsingular $\underline A$.

\subsection*{Semi-regularization algorithm for the pair \eqref{fey}:}
\begin{enumerate}
  \item
If $B_{33}\ne 0$, we reduce it by transformations (i) to the form
\[
B_{33}=\mat{0&I_k&0\\-I_k&0&0\\0&0&0}\ (k>0),
\]
then using  transformations (v) we make zero all entries outside of $I_k$ that are located in the rows and columns crossing $I_k$ (due to \eqref{ksw}, we also  make zero all entries outside of $-I_k$ that are located in the rows and columns crossing $-I_k$). Delete \emph{$k$ direct summands $\mc K_1=(\matt{0&0\\0&0},\matt{0&1\\-1&0})$} from $(A,B)$ (thus, we delete the rows and columns that cross $I_k$ and the rows and columns that cross $-I_k$). We obtain $(A,B)$ with $B_{33}=0$.

 \item
 If the columns of vertical strip 3 of $B$ are linearly dependent, then we fix a maximal system of linearly independent columns, and make zero the other columns in vertical strip 3; they give \emph{direct summands $\mc L_1=([0],[0])$.} Deleting them, we obtain $(A,B)$ in which the columns of vertical strip 3 of $B$ are linearly independent.

   \item 

If vertical strip 3 of $B$ is empty, then $A$ is nonsingular and a semi-regularization decomposition has been constructed. Suppose that vertical strip 3 of $B$ is nonempty. If the last column of $B_{13}$ is zero, we make it nonzero by transformations (iv). Reduce the last column of $B_{13}$ to the form $[0\,\dots\,0\,1]^T$, then make zero all entries of the last column of $B$ under $1$. Make the last row of horizontal strip 1 of $B$ equaling $[0\,\dots\,0\, 1]$ and obtain
\[
B=\arr{{cc|c|cc}&\vdots&&&\vdots\\
\dots&.&\dots&\dots&1\\\hline
&\vdots&&&\vdots\\\hline
&\vdots&&0&\vdots\\
\dots&-1&\dots&\dots&.\\
},
\]
in which the dots denote zero entries. Two cases are possible:

\begin{itemize}
   \item[(a)]
 First suppose that the last row of $B_{23}$ is nonzero. We reduce it to $[0\,\dots\,0\, 1\,0]$, then make zero  all entries of $B$ above 1, reduce the last row of horizontal strip 2 to  $[0\,\dots\,0\, 1]$ and obtain
\[
B=\arr{{cc|cc|ccc}
&\vdots  &   &\vdots   &  &\vdots &\vdots \\
\dots&. &   \dots&. &.\,.  &0&1
                               \\ \hline   
&\vdots  &   &\vdots   &  &\vdots &\vdots \\
\dots&. &   \dots&. &.\,.  &1&0
                                \\ \hline   
&:  &   &:\\
\dots&0 &   \dots&-1 &  &0&\\
\dots&-1   &   \dots&0 &  &&\\
}
\]
in which the dots denote zero entries.
Thus, $(A,B)$ has the direct summand
\begin{equation}\label{mmn}
\left(
 \begin{MAT}(c){ccccl}
 &\phantom{=}&&    &\\
&1&&    &\scriptstyle \it 2  \\
-1&&&   &\scriptstyle \it 3\\
\phantom{1}&&&    &\scriptstyle \it 1\\
\phantom{1}&&&    &\scriptstyle \it 4\\
\scriptstyle \it  2&\scriptstyle \it 3&\scriptstyle \it 1 &\scriptstyle \it 4&
\addpath{(0,1,4)rrrruuuulllldddd}
\addpath{(1,1,3)uuuu}
\addpath{(2,1,3)uuuu}
\addpath{(0,3,3)rrrr}
\addpath{(0,4,3)rrrr}
\\
\end{MAT}\!\!\!\!,\:
 \begin{MAT}(c){ccccl}
  &\phantom{=}&&    &\\
&&&1    &\scriptstyle \it 2  \\
&&1&   &\scriptstyle \it 3\\
&-1&&    &\scriptstyle \it 1\\
-1&&&    &\scriptstyle \it 4\\
\scriptstyle \it  2&\scriptstyle \it 3&\scriptstyle \it 1 &\scriptstyle \it 4&
\addpath{(0,1,4)rrrruuuulllldddd}
\addpath{(1,1,3)uuuu}
\addpath{(2,1,3)uuuu}
\addpath{(0,3,3)rrrr}
\addpath{(0,4,3)rrrr}
\\
\end{MAT}\!\!\!\!\!
\right),
\end{equation}
in which all unspecified entries are zero.
Rearranging the rows and columns as indicated, we obtain
\[
\left(
 \begin{MAT}(c){ccccl}
 &\phantom{=}&&    &\\
\phantom{1}&&&    &\scriptstyle \it 1  \\
&&1&   &\scriptstyle \it 2\\
&-1&&    &\scriptstyle \it 3\\
\phantom{1}&&&    &\scriptstyle \it 4\\
\scriptstyle \it  1&\scriptstyle \it 2&\scriptstyle \it 3 &\scriptstyle \it 4&
\addpath{(0,1,4)rrrruuuulllldddd}
\addpath{(2,1,3)uuuu}
\addpath{(0,3,3)rrrr}
\\
\end{MAT}\!\!\!\!,\:
 \begin{MAT}(c){ccccl}
 &\phantom{=}&&    &\\
&&-1&    &\scriptstyle \it 1  \\
&&&1   &\scriptstyle \it 2\\
1&&&    &\scriptstyle \it 3\\
&-1&&    &\scriptstyle \it 4\\
\scriptstyle \it  1&\scriptstyle \it 2&\scriptstyle \it 3&\scriptstyle \it 4&
\addpath{(0,1,4)rrrruuuulllldddd}
\addpath{(2,1,3)uuuu}
\addpath{(0,3,3)rrrr}
\\
\end{MAT}\!\!\!\!\!
\right).
\]
Hence, \eqref{mmn} \emph{is congruent  to
$\mc K_2$.}
We delete the summand \eqref{mmn} from $(A,B)$ and repeat step 3.

   \item[(b)]
 Now suppose that the last row of $B_{23}$ is zero.
Then we repeat step 3, ignoring the last rows and columns of all strips of $A$ and $B$, and obtain
\[
\setlength{\arraycolsep}{3pt}
B=\arr{{ccc|ccc|ccc}
&:&: &    &&   &   &:&:
       \\
\dots&. &.   &\dots&\!\!.\,.&\!\!.\,.&\dots &1&0
       \\
\dots&. &.   &\dots&\!\!.\,.&\!\!.\,.&\dots &0&1
                  \\ \hline   
& :&:  &    &&   &   &:&:
       \\
&\smash{\vdots}& \smash{\vdots} && & & &0&0
       \\
&\smash{\vdots}& \smash{\vdots}&& & &\!\!0 &\!\!\!\dots\!&0
                  \\ \hline   
&: &: &    &&0 &   &&
       \\
\dots&-1 &0   &\dots&0 & \smash{\vdots} &&0&
       \\
\dots&0 &-1   &\dots&0 &0 &&&
\\
}
\]

If the penultimate row of $B_{23}$ is nonzero, then $(A,B)$ has a direct summand of the form \eqref{mmn} that \emph{is congruent  to $\mc K_2$}, we delete it and repeat step (b).

 \end{itemize}

We repeat (a) and (b) until we obtain
\begin{equation}\label{cdo}
\setlength{\arraycolsep}{4.5pt}
(A,B)=
\left(
\arr{{cc|cc|c} 0&0 & I&0&0\\
0&0 & 0&I&0\\
\hline
-I&0 &0&0&0\\
0&-I&0&0&0\\
\hline
0&0& 0&0&0\\
},
\arr{{cc|cc|c} B_{11}'&0 & B_{12}'&B_{13}'&0\\
0&0 & 0&0&I\\
\hline
B_{21}'&0 & B_{22}'&B_{23}'&0\\
B_{31}'&0 & B_{32}'&B_{33}'&0\\
\hline
0&-I & 0&0&0\\
}\right),
\end{equation}

 \item
Let us reduce \eqref{cdo} by those congruence transformations that preserve $A$ and strips 2 and 5 of $B$,  both vertical and horizontal. Then its subpair
\begin{equation}\label{fer}
(A',B'):=\left(\mat{0&I&0\\ -I&0&0\\0&0&0},
\mat{
B_{11}'&B_{12}'&B_{13}'\\
B_{21}'&B_{22}'&B_{23}'\\
B_{31}'&B_{32}'&B_{33}'}
\right)
\end{equation}
 is reduced by those congruence transformations that preserve $A'$, which means that $B'$ is reduced by transformations (i)--(v).\footnote{
For example, we can add row $i$ of
$[B_{31}'\,B_{32}'\,B_{33}']$ to row $j$ of
$[B_{21}'\,B_{22}'\,B_{23}']$. This addition spoils the zero block $(3,2)$ of $A$ in \eqref{cdo}; we restore it by  subtracting column $j$ of vertical strip 1 from column $i$ of vertical strip 2. This addition spoils vertical strip 2 of $B$; we restore it by the rows of the last strip of $B$.}

We apply to  $(A',B')$  steps 1--4:
\begin{itemize}
  \item
If $B_{33}'\ne 0$, then we delete summands of the form
\begin{equation*}\label{mmf}
\left(
 \begin{MAT}(c){ccccccl}
 &\phantom{=}&&&&&\\
&&1&&    &&\scriptstyle \it 3  \\
&&&1&    &&\scriptstyle \it 4  \\
-1&&&&    &&\scriptstyle \it 5  \\
&-1&&&    &&\scriptstyle \it 2  \\
&&&&\phantom{-1}    &&\scriptstyle \it 1 \\
\phantom{-1}&&&&    &&\scriptstyle \it 6\\
\scriptstyle \it  3&\scriptstyle \it 4&\scriptstyle \it 5 &\scriptstyle \it 2&\scriptstyle \it  1&\scriptstyle \it 6&
\addpath{(0,1,4)rrrrrruuuuuulllllldddddd}
\addpath{(2,1,3)uuuuuu}
\addpath{(4,1,3)uuuuuu}
\addpath{(0,3,3)rrrrrr}
\addpath{(0,5,3)rrrrrr}
\\
\end{MAT}\!\!\!\!\!,\:
 \begin{MAT}(c){ccccccl}
&\phantom{=}&&&&&\\
&&&&    &1&\scriptstyle \it 3  \\
&&&&    1&&\scriptstyle \it 4  \\
&&&1&    &&\scriptstyle \it 5  \\
&&-1&&    &&\scriptstyle \it 2  \\
&-1&&&    &&\scriptstyle \it 1 \\
-1&&&&    &&\scriptstyle \it 6\\
\scriptstyle \it  3&\scriptstyle \it 4&\scriptstyle \it 5 &\scriptstyle \it 2&\scriptstyle \it  1&\scriptstyle \it 6&
\addpath{(0,1,4)rrrrrruuuuuulllllldddddd}
\addpath{(2,1,3)uuuuuu}
\addpath{(4,1,3)uuuuuu}
\addpath{(0,3,3)rrrrrr}
\addpath{(0,5,3)rrrrrr}
\\
\end{MAT}\!\!\!\!\right),
\end{equation*}
in which all unspecified entries are zero.
These summands \emph{are congruent to $\mc K_3$} since
rearranging the rows and columns as indicated, we obtain
\begin{equation*}\label{mrf}
\left(
 \begin{MAT}(c){ccccccl}
 &&&& \phantom{=}   &&\\
&&&&  \phantom{1}  &&\scriptstyle \it 1  \\
&&&-1&    &&\scriptstyle \it 2  \\
&&&&    1&&\scriptstyle \it 3 \\
&1&\phantom{-1}&&    &&\scriptstyle \it 4  \\
&&-1&&    &&\scriptstyle \it 5 \\
\phantom{-1}&&&&    &&\scriptstyle \it 6\\
\scriptstyle \it  1&\scriptstyle \it 2&\scriptstyle \it 3 &\scriptstyle \it 4&\scriptstyle \it  5&\scriptstyle \it 6&
\addpath{(0,1,4)rrrrrruuuuuulllllldddddd}
\addpath{(3,1,3)uuuuuu}
\addpath{(0,4,3)rrrrrr}
\\
\end{MAT}\!\!\!\!\!,\:
 \begin{MAT}(c){ccccccl}
 &&&& \phantom{=}   &&\\
&&&-1&    &&\scriptstyle \it 1  \\
&&&&    -1&&\scriptstyle \it 2 \\
&&&&    &1&\scriptstyle \it 3 \\
1&&&&    &&\scriptstyle \it 4  \\
&1&&&    &&\scriptstyle \it 5 \\
&&-1&&    &&\scriptstyle \it 6\\
\scriptstyle \it  1&\scriptstyle \it 2&\scriptstyle \it 3 &\scriptstyle \it 4&\scriptstyle \it  5&\scriptstyle \it 6&
\addpath{(0,1,4)rrrrrruuuuuulllllldddddd}
\addpath{(3,1,3)uuuuuu}
\addpath{(0,4,3)rrrrrr}
\\
\end{MAT}\!\!\!\!\!\right).
\end{equation*}
We have obtained $B_{33}'=0$.

  \item
 If the columns of vertical strip 3 of $B'$ are linearly dependent, then we delete from \eqref{cdo} the \emph{summands}
\begin{equation}\label{nnr}
\mc L_2=\left(\arr{{c|cc}0&1&0\\\hline -1&0&0\\0&0&0},
\arr{{c|cc}0&0&1\\\hline 0&0&0\\-1&0&0}
\right).
\end{equation}

  \item

Reasoning as in step 4, we delete from $(A,B)$ all summands that \emph{are congruent to $\mc K_3$} and obtain $(A,B)$ of the form
\[
\setlength{\arraycolsep}{3.5pt}
\left(
\arr{{ccc|ccc|c}
&& & I&0&0&\\
&0& & 0&I&0& 0\\
&& & 0&0&I& \\
\hline
-I&0 &0&&&&\\
0&-I&0 &&0&&0\\
0&0&-I & &&& \\
\hline
&0&& &0&&0\\
},
\arr{{ccc|ccc|c}
B_{11}''&0&0 & B_{12}''&B_{13}''&0&0\\
0&0&0 & 0&0&I& 0\\
0&0&0 & 0&0&0& I\\
\hline
B_{21}''&0 &0& B_{22}''&B_{23}''&0&0\\
B_{31}''&0&0 & B_{32}''&B_{33}''&0&0\\
0&-I&0 & 0&0&0& 0\\
\hline
0&0&-I & 0&0&0&0\\
}\right),
\]
in which
\[
(A'',B''):=\left(\mat{0&I&0\\ -I&0&0\\0&0&0},
\mat{
B_{11}''&B_{12}''&B_{13}''\\
B_{21}''&B_{22}''&B_{23}''\\
B_{31}''&B_{32}''&B_{33}''}
\right)
\]
is reduced by arbitrary congruence transformations that preserve $A''$, and so $B''$ is reduced by transformations (i)--(v).
\end{itemize}

We repeat this reduction until we obtain $(A^{(k)},B^{(k)})$, in which the third vertical and horizontal strips are empty. Then $A^{(k)}=\matt{0&I\\-I&0}$ and $(\underline A, \underline B):=(A^{(k)},B^{(k)})$.
\end{enumerate}

\begin{remark}\label{kah}
The regularization algorithm for pairs $(A,B)$ of skew-symmetric matrices constructed by Kozlov \cite[\S\,3]{koz} has a gap. Steps 1 and 2 of his algorithm reduce $(A,B)$ to the form
\begin{equation*}\label{mms}
(A',B'):=\left(
\mat{0&-1&0&\cdots&0\\
1&0&0&\cdots&0\\
0&0&*&\cdots&*\\[-5pt]
\vdots&\vdots&\vdots&&\vdots\\
0&0&*&\cdots&*}
                                     ,\
\mat{0&0&0&\cdots&0\\
0&*&*&\cdots&*\\
0&*&*&\cdots&*\\[-5pt]
\vdots&\vdots&\vdots&&\vdots\\
0&*&*&\cdots&*}
\right).
\end{equation*}

He states that step 3 reduces $B'$ to the form
\[
\mat{
0&0&0&0&\cdots&0\\
0&0&1&0&\cdots&0\\
0&-1&0&0&\cdots&0\\
0&0&0&*&\cdots&*\\[-5pt]
\vdots&\vdots&\vdots&\vdots&&\vdots\\
0&0&0&*&\cdots&*}
\]
while
preserving $A'$,
which is impossible: under there transformations with $B'$ the first row of $A'$ is reduced to the form $[0\,-1\,*\,\cdots\,*]$. \emph{The canonical form of pairs of skew-symmetric matrices cannot be proved in an elementary way} (as in \cite{koz}) since it contains Kronecker's canonical form for matrix pencils.
\end{remark}

\section{Proof of the canonical form for pairs over an algebraically closed field}\label{ssw}

In this section, we prove the following well-known  theorem (see \cite{sch,ser1,thom})
using the regularization algorithm from Section \ref{ss2} and the method that was developed by Nazarova and Roiter \cite{naz+roi} (see also \cite{naz+roi+ser} and \cite[Sect. 1.8]{gab-roi}) to prove Kronecker's canonical form for matrix pencils.

\begin{theorem}\label{ttt}
Each pair of skew-symmetric matrices over  an algebraically closed field\/ $\ff$ of characteristic not $2$ is congruent to a direct sum of pairs of the form
\[
\mc J_n(\lambda)\ (\lambda \in\ff),\quad
\mc K_n,\quad \mc L_n
\]
$($see \eqref{a4}$)$.
This sum is uniquely determined, up to permutations of summands.
\end{theorem}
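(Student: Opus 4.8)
My plan is to establish existence and uniqueness separately, using the regularization algorithm of Section~\ref{ss2} to peel off the singular summands and then handling the nonsingular remainder by the classical Jordan machinery over $\ff$. For existence, I would first apply the regularization algorithm to $(A,B)$. By the construction of Section~\ref{ss2} it returns a pair congruent to $(A,B)$ of the form $(\underline{\underline A},\underline{\underline B})\oplus(\text{a direct sum of blocks }\mc J_n,\mc K_n,\mc L_n)$, in which both $\underline{\underline A}$ and $\underline{\underline B}$ are nonsingular. Since $\mc J_n=\mc J_n(0)$, the blocks $\mc J_n,\mc K_n,\mc L_n$ are already among the summands listed in \eqref{a4}, so the only thing left is to reduce the regular part $(\underline{\underline A},\underline{\underline B})$.

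To reduce the regular part, note that $\underline{\underline A}$ is nonsingular and skew-symmetric over a field of characteristic not $2$, hence of even size and congruent to $A_0:=\matt{0&I\\-I&0}$; I would fix $\underline{\underline A}=A_0$ and then reduce $\underline{\underline B}$ using only congruences $S(\cdot)S^T$ with $SA_0S^T=A_0$, i.e.\ symplectic $S$. Setting $C:=A_0^{-1}\underline{\underline B}$, the relation $SA_0S^T=A_0$ gives $C\mapsto S^{-T}CS^T$, so the symplectic congruence class of $\underline{\underline B}$ is governed by the conjugacy class of $C$ subject to the constraint that $C$ is selfadjoint for the form $A_0$ (equivalently, $A_0C$ is skew). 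As $\underline{\underline B}$ is nonsingular, $C$ is nonsingular, so $0$ is not an eigenvalue. Over the algebraically closed $\ff$, $C$ has a Jordan form, and the $A_0$-selfadjointness forces the Jordan blocks to pair off as $J_n(\lambda)\oplus J_n(\lambda)$; each such pair is realized, in the symplectic normalization, by exactly one block $\mc J_n(\lambda)$ with $\lambda\ne0$ (indeed $A_0^{-1}$ times the second matrix of $\mc J_n(\lambda)$ is similar to $J_n(\lambda)\oplus J_n(\lambda)$). This produces the decomposition \eqref{a4}.

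For uniqueness, I would argue that the whole decomposition is an invariant. The singular summands $\mc J_n(0),\mc K_n,\mc L_n$ and the congruence class of $(\underline{\underline A},\underline{\underline B})$ are determined by the regularization decomposition, which is canonical (as observed in the Introduction); their multiplicities can be read off from the ranks of the matrices built from $A$ and $B$ that the algorithm tracks. For the regular part, the blocks $\mc J_n(\lambda)$, $\lambda\ne0$, are in bijection with the pairs of Jordan blocks of $C=A_0^{-1}\underline{\underline B}$, and the Jordan form is unique up to permutation, so the multiplicity of each $\mc J_n(\lambda)$ is determined. Combining the two, \eqref{a4} is unique up to permutation of summands.

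I expect two delicate points. First is the claim that the $A_0$-selfadjoint operator $C$ has a canonical form determined by its bare Jordan data, with blocks paired as $J_n(\lambda)\oplus J_n(\lambda)$: over an algebraically closed field there are no extra sign-type invariants as in the real symmetric case, but this must be justified, and it is precisely the doubling that yields $\mc J_n(\lambda)$. Second, and harder, is uniqueness: one must show that the regularization decomposition is genuinely canonical, i.e.\ that the multiplicities of the singular blocks and the congruence type of the regular part do not depend on the choices made in the algorithm. I would secure this through the Krull--Schmidt property for this matrix problem, which is exactly what the Nazarova--Roiter method supplies; alternatively, one may exhibit a complete system of numerical invariants (ranks together with the Jordan data of $C$) computed directly from $(A,B)$.
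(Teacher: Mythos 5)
Your route for peeling off the singular summands (the regularization algorithm of Section~\ref{ss2}) matches the paper, but your treatment of the two crux points---the regular part and uniqueness---leaves genuine gaps, and in both places the paper does something different and more elementary. For the regular part you reduce to classifying the $A_0$-selfadjoint operator $C=A_0^{-1}\underline{\underline B}$ up to symplectic conjugacy and assert that its Jordan blocks must pair off as $J_n(\lambda)\oplus J_n(\lambda)$ with no further invariants over an algebraically closed field. You flag this yourself as needing justification, and it is exactly the hard content: one must show that no single Jordan block carries a compatible nondegenerate skew form (every solution $X$ of $XJ_n(\lambda)=J_n(\lambda)^TX$ is symmetric, so none is skew and nonsingular), and that the hyperbolic form on a doubled block is unique up to the allowed transformations. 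This is essentially the alternative proof sketched in the paper's final remark, which rests on the MacDuffee--Williamson theorem \eqref{uvp} (equivalence implies congruence for pairs of skew forms); you supply neither that theorem nor a substitute. The paper avoids all symplectic Jordan theory with a short trick you missed: if $A$ is nonsingular, pick $\lambda\in\ff$ with $\det(\lambda A-B)=0$; then $(B-\lambda A,\,A)$ has singular first matrix, so the already-proved singular case (Lemma~\ref{jyp} plus a reduction by permutations and sign scalings) shows it is congruent to some $\mc K_n$, and undoing the shift gives that $(A,B)$ is congruent to $\mc J_n(\lambda)$.

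The second gap is uniqueness. You invoke the canonicity of the regularization decomposition ``as observed in the Introduction,'' but the Introduction derives that canonicity \emph{from} the canonical form \eqref{a4}, i.e., from the theorem being proved, so as stated your argument is circular. Your fallback---a Krull--Schmidt property for pairs of skew-symmetric forms under congruence---is true but nontrivial, and it is not what the Nazarova--Roiter method supplies in this paper (there it is used to prove the reduction, not an exchange property for congruence). The paper's uniqueness argument is complete and short: congruent pairs are equivalent; by Kronecker's theorem the equivalence class determines a unique multiset of pencil blocks \eqref{rtx}; and the canonical pairs $\mc J_n(\lambda)$, $\mc K_n$, $\mc L_n$ have the pairwise distinct equivalence types \eqref{trx}, so two distinct direct sums of canonical pairs are not even equivalent, hence not congruent. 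To repair your outline, replace the two flagged steps by the shift trick for the nonsingular case and by this Kronecker-equivalence comparison for uniqueness.
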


A pair of skew-symmetric matrices is \emph{indecomposable} if it is not congruent to a direct sum of pairs of skew-symmetric matrices of smaller sizes.
The algorithm from Section \ref{ss2} is used only in the proof of  the following lemma.

\begin{lemma}\label{jyp}
Over a field of characteristic not $2$, let $(A,B)$ be an indecomposable pair of skew-symmetric matrices, in which $A$  singular. Then $(A,B)$ is congruent to a pair
satisfying the following condition:
\begin{equation}\label{rly}
\parbox[c]{0.8\textwidth}{each row and  each column of its matrices contains at most one nonzero entry that is $1$ or $-1$ and the other entries are zero.}
\end{equation}
\end{lemma}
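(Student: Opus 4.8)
The plan is to invoke the semi-regularization algorithm of Section~\ref{ss2} as the engine, and then exploit indecomposability to collapse the output to a single summand of the desired shape. Since $A$ is singular, running the algorithm on $(A,B)$ produces a decomposition congruent to a direct sum $(\underline{\underline A},\underline{\underline B})\oplus(A_1,B_1)\oplus\dots\oplus(A_t,B_t)$, where $(\underline{\underline A},\underline{\underline B})$ is a pair of nonsingular matrices and each singular summand is one of $\mc J_n$, $\mc K_n$, or $\mc L_n$. Because $(A,B)$ is indecomposable, this entire decomposition must consist of exactly one nonzero summand: the nonsingular part $(\underline{\underline A},\underline{\underline B})$ is empty (otherwise, since $A$ is singular, there would be at least one singular summand as well, contradicting indecomposability), and among the singular summands exactly one is present. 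Hence $(A,B)$ is congruent to a single $\mc J_n$, $\mc K_n$, or $\mc L_n$.

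The second and essentially purely descriptive step is to verify that each of the canonical singular pairs $\mc J_n$, $\mc K_n$, $\mc L_n$ satisfies condition \eqref{rly}. This is immediate from their explicit definitions in \eqref{a1} and \eqref{a3}: each is built as a block $\matt{0&X\\-X^T&0}$ where $X$ is one of $I_n$, $J_n(0)$, $L_n$, or $R_n$, and in every such matrix each row and each column carries at most one nonzero entry, equal to $\pm1$. Thus both members of each canonical pair individually meet the requirement, and since $\mc J_n$, $\mc K_n$, $\mc L_n$ are pairs of such matrices placed in disjoint row/column blocks, condition \eqref{rly} holds for the whole pair.

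The main subtlety I expect is justifying that the algorithm does \emph{not} in fact require $A$ to already be in the reduced form \eqref{fey} before we start, and that ``indecomposable'' correctly forces a single summand even though the algorithm as written deletes summands along the way. The first point is handled by the opening convention of Section~\ref{ss2}: we may always congruence-reduce $A$ to its canonical form for skew-symmetric congruence, which is of shape \eqref{fey} (here the zero block has size equal to the corank of $A$, and singularity of $A$ guarantees this block is nonzero). The second point is the crux: one must argue that a congruence to a nontrivial direct sum contradicts indecomposability. Concretely, the algorithm exhibits an explicit nonsingular $S$ with $S(A,B)S^T$ equal to a direct sum of the listed pairs; if that sum had two or more nonzero summands, $(A,B)$ would be decomposable by definition. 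So indecomposability is exactly the hypothesis that trims the algorithm's output to one block, and it is precisely this reduction — rather than any delicate new computation — that yields \eqref{rly}.
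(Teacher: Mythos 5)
Your trimming logic is sound as far as it goes: by the paper's definition of indecomposability, a congruence to a direct sum with two or more nonzero summands is impossible; since $A$ is singular, the single surviving summand cannot be the nonsingular regular part (nor, in fact, $\mc J_n$, whose first matrix $\matt{0&I_n\\-I_n&0}$ is nonsingular); and the canonical pairs visibly satisfy \eqref{rly}. But this is a genuinely different route from the paper's, and it rests on more than Section~\ref{ss2} rigorously delivers. The paper's proof of Lemma~\ref{jyp} uses only steps 1--3 of the semi-regularization algorithm---the part whose deleted summands ($\mc K_1$, $\mc L_1$, $\mc K_2$) are verified completely explicitly---and then runs an induction on the size of the pair: either $(A,B)$ is congruent to one of those small pairs, or it is congruent to \eqref{cdo}, in which case the strictly smaller subpair $(A',B')$ of \eqref{fer} may be assumed by induction to satisfy \eqref{rly}, and the explicit block form of \eqref{cdo} (identity blocks occupying disjoint rows and columns) then gives \eqref{rly} for $(A,B)$. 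Only the weak condition \eqref{rly} is transported through the recursion; identifying the actual canonical pairs $\mc K_n$, $\mc L_n$ is deliberately deferred to Case 1 of the proof of Theorem~\ref{ttt}, where it is done afterwards by permutations and sign changes.

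Your proof instead invokes, as a black box, the statement that the algorithm's full output is a regularization decomposition---i.e.\ that every summand it deletes, at every depth of the recursion in step 4, is congruent to some $\mc K_n$ or $\mc L_n$, and that the recursion terminates in a pair with nonsingular first matrix. Section~\ref{ss2} verifies this explicitly only at the first levels ($\mc K_1$, $\mc K_2$, $\mc K_3$, $\mc L_1$, $\mc L_2$) and covers the general depth by ``we repeat this reduction''; making that claim precise requires an induction showing that a canonical summand of $(A^{(k)},B^{(k)})$, once the linking identity strips of \eqref{cdo} are re-attached at each level, remains a direct summand of $(A,B)$ congruent to a larger $\mc K_n$ or $\mc L_n$. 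That induction is exactly the work your proof skips, and it is exactly what the paper's weaker formulation avoids: condition \eqref{rly} survives one level of \eqref{cdo} by inspection, whereas ``is congruent to $\mc K_n$ or $\mc L_n$'' does not. So your argument proves a stronger statement than the lemma asserts (it would render Case 1 of the proof of Theorem~\ref{ttt} redundant), but as a proof inside this paper it has a gap: the step ``the algorithm produces a regularization decomposition'' is precisely the assertion whose rigorous justification the paper packages, in weakened inductive form, as Lemma~\ref{jyp} itself. To make your route self-contained you would have to supply that induction on recursion depth, at which point you would essentially be rewriting the paper's proof with a stronger (and harder to propagate) induction hypothesis.
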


\begin{proof}
We apply steps 1--3 of the semi-regularization algorithm from Section \ref{ss2} and obtain that $(A,B)$ is congruent to a direct sum of \eqref{cdo} and pairs of the form $\mc  K_1$, $\mc  L_1$, and $\mc  K_2$. Since $(A,B)$ is indecomposable, there are two possibilities:
\begin{itemize}
  \item $(A,B)$ is congruent to $\mc  K_1$, $\mc  L_1$, or $\mc  K_2$; they satisfy \eqref{rly}.

  \item $(A,B)$ is congruent to \eqref{cdo}.
Since $A$ is singular, the size of  $(A',B')$ is less than the size of  $(A,B)$.  Reasoning by induction, we suppose that $(A',B')$ defined in \eqref{fer} satisfies \eqref{rly}.  Then $(A,B)$ satisfies \eqref{rly} too, which follows from the form of \eqref{cdo}.
\end{itemize}
\vskip-2em
 \end{proof}

\begin{proof}[Proof of Theorem \ref{ttt}]
Let $(A,B)$ be an indecomposable pair of skew-symmetric matrices over $\ff$.  Two cases are possible.
\medskip

\emph{Case 1:  $A$ is singular.} By congruence transformations with $(A,B)$, we make $A$ and $B$ satisfying \eqref{rly}. Let us show that the obtained $(A,B)$ is reduced to $\mc K_n$ or $\mc L_n$ by the following congruence transformations:
\begin{itemize}
  \item
  interchange  rows $i$ and $j$  and then interchange columns $i$ and $j$ in both the matrices,

  \item
  multiply row $i$ and column $i$ by $-1$ in both the matrices.
\end{itemize}
We make only these transformations with $(A,B)$. For example (as in \eqref{ksw}), if we write about interchanging two columns in $A$, keep in mind that we make the same interchange of columns in $B$ and rows in $A$ and $B$.

Let $A$ and $B$ be $(2n-1)\times (2n-1)$ or $2n\times 2n$; partition them into blocks
\[
(A,B)=\left(\arr{{c|c}A_{11}&A_{12}\\\hline A_{21}&A_{22}},
\arr{{c|c}B_{11}&B_{12}\\\hline B_{21}&B_{22}}\right),\quad
\text{$A_{22}$ and $B_{22}$ are $n\times n$.}
\]

Since $A$ is singular and satisfies \eqref{rly}, it has a zero column; we interchange it with the last column and obtain a zero last column.
If the last column of $B$ is also zero, then $\mc L_1=([0],[0])$ is a direct summand of $(A,B)$. Since $(A,B)$ is indecomposable, $(A,B)=\mc L_1$, which proves the theorem in this case.

It remains to consider the case when the last column of $B$ is nonzero. Then  it contains $\varepsilon \in\{-1,1\}$. We multiply the row containing $\varepsilon $ by $\varepsilon$,  interchange it with the last row of $[B_{11}\,B_{12}]$, and obtain
\[
(A,B)=\left(\arr{{cc|cc}&&  &\vdots\\
&&&0\\\hline
&&  &\vdots\\
\dots&0&\dots&.},
\arr{{cc|cc}&\vdots&  &\vdots\\
\dots&.&\dots&1\\\hline
&\vdots&  &\vdots\\
\dots&-1&\dots&.}
\right),
\]
in which the dots denote zero entries.
If the last row of $[A_{11}\,A_{12}]$ is zero, then $\mc K_1=(\matt{0&0\\0&0},\matt{0&1\\-1&0})$ is a direct summand of $(A,B)$, and so $(A,B)=\mc K_1$, which proves the theorem in this case.

It remains to consider the case when the last row of $[A_{11}\,A_{12}]$  contains
$\varepsilon \in\{-1,1\}$. We multiply the column containing $\varepsilon $ by $\varepsilon$,  interchange it with the penultimate column, and obtain
\[
(A,B)=\left(\arr{{cc|ccc}&\vdots&  &\vdots&\vdots\\
\dots&.&\dots&1&0\\\hline
&\vdots&  &\vdots&\vdots\\
\dots&-1&\dots&.&. \\
\dots&0&\dots&.&. },
\arr{{cc|ccc}&\vdots&  &&\vdots\\
\dots&.&\dots&0&1\\\hline
&\vdots&  &&\vdots\\
&0&&&. \\
\dots&-1&\dots&.&. }
\right).
\]
If the penultimate column of $B$ is zero, then $\mc L_2$ (see \eqref{nnr}) is a direct summand of $(A,B)$, and so $(A,B)=\mc L_2$.

It remains to consider the case  when the penultimate column of $B$  contains
$\varepsilon \in\{-1,1\}$. We multiply the row containing  $\varepsilon $ by $\varepsilon$,  interchange it with the penultimate row of $[B_{11}\,B_{12}]$, and obtain
\[
(A,B)=\left(\arr{{ccc|ccc}&&\vdots&  &\vdots&\vdots\\
&&.&  &0&0\\
\dots&.&.&\dots&1&0\\\hline
&&\vdots&  &\vdots&\vdots\\
\dots&0&-1&\dots&.&.\\
\dots&0&0&\dots&.&. },
\arr{{ccc|ccc}&\vdots&\vdots&  &\vdots&\vdots\\
\dots&.&.&\dots&1&0\\
\dots&.&.&\dots&0&1\\\hline
&\vdots&\vdots&  &\vdots&\vdots\\
\dots&-1&0&\dots&.&.\\
\dots&0&-1&\dots&.&. }\right).
\]
If the  penultimate row of $[A_{11}\,A_{12}]$ is zero, then $\mc K_2$ is a direct summand of $(A,B)$, and so $(A,B)=\mc K_2$.

It remains to consider the case when the  penultimate row of $[A_{11}\,A_{12}]$ contains
$\varepsilon \in\{-1,1\}$. We multiply the column containing  $\varepsilon $ by $\varepsilon$,  interchange it with the pre-penultimate column, and obtain
\[
\left(\arr{{ccc|cccc}&\vdots&\vdots&  &\vdots&\vdots&\vdots\\
\dots&.&.&\dots&1  &0&0\\
\dots&.&.&\dots&0&1&0\\\hline
&\vdots&\vdots&  &\vdots&\vdots&\vdots\\
\dots&-1&0&\dots&.&. &.\\
\dots&0&-1&\dots&.&.&.\\
\dots&0&0&\dots&.&. &.},
\arr{{ccc|cccc}&\vdots&\vdots&  &&\vdots&\vdots\\
\dots&.&.&\dots&0  &1&0\\
\dots&.&.&\dots&0&0&1\\\hline
&\vdots&\vdots&  &&\vdots&\vdots\\
&0&0&&&. &.\\
\dots&-1&0&\dots&.&.&.\\
\dots&0&-1&\dots&.&. &.}\right),
\]
and so on.

Repeating this reduction, we find that $(A,B)$ is congruent to $\mc K_n$ or $\mc L_n$.
\medskip

\emph{Case 2: $A$ is nonsingular.} Since $\ff$ is algebraically closed, there exists $\lambda \in\ff$ such that $\det(A\lambda-B)=0$. By case 1, there is a nonsingular $S$ such that
\[S(B-\lambda A,A)S^T
=
\mc K_n=\left(\mat{0&J_n(0) \\-J_n(0)^T &0},\mat{0&I_n\\-I_n&0}\right).
\]
Then
\[S(B,A)S^T
=
\left(\mat{0&J_n(\lambda) \\-J_n(\lambda)^T &0},
\mat{0&I_n\\-I_n&0}\right),
\]
and so $(A,B)$ is congruent to $\mc J_n(\lambda)$.
\medskip

We have proved that each pair of skew-symmetric matrices over $\ff$ is congruent to a direct sum of pairs of the form  $\mc J_n(\lambda)$,  $\mc K_n$, and $\mc L_n$. Let us prove the uniqueness of this direct sum. Two pairs $(A,B)$ and $(A',B')$ of matrices of the same size are \emph{equivalent} if there exist nonsingular matrices $R$ and $S$ such that
$
R(A,B)S:=(RAS,RBS)=(A',B')$. Thus, congruent pairs are equivalent.
By Kronecker's theorem for matrix pencils (see \cite[Section XII]{gan2}), each
matrix pair over $\ff$ is equivalent to a direct sum, uniquely determined up to permutation of summands, of pairs of the types
\begin{equation}\label{rtx}
(I_n,J_n(\lambda)),\quad (J_n(0),I_n),\quad  (L_n,R_n),\quad  (L_n^T,R_n^T),
\end{equation}
in which $n\in\{1,2,\dots\}$, $\lambda \in\ff$, and $L_n$ and $R_n$ are defined in \eqref{sdw}.

The pairs  $\mc J_n(\lambda)$,  $\mc K_n$,  $\mc L_n$ are equivalent to
\begin{equation}\label{trx}
(I_n,J_n(\lambda))\oplus(I_n,J_n(\lambda)),\
(J_n(0),I_n)\oplus(J_n(0),I_n),\ (L_n,R_n)\oplus (L_n^T,R_n^T).
\end{equation}
Thus, two distinct direct sums of pairs of the form $\mc J_n(\lambda)$,  $\mc K_n$, and $\mc L_n$ (determined up to permutations of summands) have distinct canonical forms for equivalence, and so these sums cannot be congruent, which proves the uniqueness in Theorem \ref{ttt}.
\end{proof}

\begin{remark}
 Theorem \ref{ttt} can  also be proved by using the description of Kronecker's canonical forms for pairs of skew-symmetric matrices under equivalence and the following surprising statement from \cite[Corollary 35.2]{mac} (see also \cite[\S\,61 and \S\,62]{dic}) for matrix pairs over an algebraically closed field $\ff$ of characteristic not 2:
 \begin{equation}\label{uvp}
\parbox[c]{0.8\textwidth}{Let $(A,B)$ and $(A',B')$ be two matrix pairs, in which $A$ and $A'$, also $B$ and $B'$, are both symmetric or  both skew-symmetric.  Then $(A,B)$ and $(A',B')$ are congruent if and only if they are equivalent.
}
\end{equation}
This statement was generalized to pairs of skew-symmetric matrices over any field of characteristic zero by Williamson \cite{wil}.
An analogous statement for
arbitrary systems of forms and linear mappings is given in \cite{roi} and
\cite[Theorem 1 and \S \,2]{ser1}.

Let $(A,B)$ be a pair of skew-symmetric matrices. By Kronecker's theorem for matrix pencils (see \eqref{rtx}), $(A,B)$ is equivalent to a direct sum
\begin{equation}\label{nky}
\bigoplus_i (I_{m_i},J_{m_i}(\lambda _i))\oplus
\bigoplus_j (J_{n_j}(0),I_{n_j})\oplus
\bigoplus_k (L_{r_k},R_{r_k})\oplus
\bigoplus_l (L_{s_l}^T,R_{s_l}^T)
\end{equation}
that is determined uniquely up to permutations of summands.

The pair $(A,B)=(-A^T,-B^T)$ is equivalent to $(A^T,B^T)$, which is equivalent to
\begin{equation*}\label{nk1y}
\bigoplus_i (I_{m_i},J_{m_i}(\lambda _i))\oplus
\bigoplus_j (J_{n_j}(0),I_{n_j})\oplus
\bigoplus_k (L_{r_k}^T,R_{r_k}^T)\oplus
\bigoplus_l (L_{s_l},R_{s_l})
\end{equation*}
Since the sum \eqref{nky} is uniquely determined by $(A,B)$, up to permutations of direct summands, the summands  of the third and fourth types in \eqref{nky} occur in pairs
\begin{equation*}
\label{dfeu}
(L_{r_k},R_{r_k})\oplus
 (-L_{r_k}^T,-R_{r_k}^T)
 \end{equation*}
By  \cite[p. 335]{thom}, the summands  of the first and second types also occur in pairs
\begin{equation*}\label{dfe1}
(I_n,J_n(\lambda))\oplus(I_n,J_n(\lambda)),\qquad
(J_n(0),I_n)\oplus(J_n(0),I_n).
\end{equation*}
Since the pairs  $\mc J_n(\lambda),  \mc K_n,\mc L_n$ are equivalent to \eqref{trx},
$(A,B)$ is equivalent to a direct sum of pairs of the types $\mc J_n(\lambda),  \mc K_n,\mc L_n$, and this sum is uniquely determined, up to permutations of summands. By \eqref{uvp},  $(A,B)$ is congruent to this direct sum.

\section*{Acknowledgement}
The  work  was  supported  in  part  by  the  UAEU  UPAR  grants  G00001922  and
G00002160.

\end{remark}


\begin{thebibliography}{99}
\bibitem{dic}
L.E. Dickson, Modern Algebraic Theories,
Benj. H. Sanborn \& Co, Chicago, 1926. Republished as: Algebraic Theories, Dover Publications, Inc., New York, 1959.

\bibitem{dmy}
A. Dmytryshyn, Miniversal deformations of pairs of skew-symmetric matrices under congruence,
Linear Algebra Appl. 506 (2016) 506--534.

\bibitem{dm-k}
A. Dmytryshyn, B. K\r{a}gstr\"{o}m, Orbit closure hierarchies of skew-symmetric matrix pencils, SIAM J. Matrix Anal. Appl. 35 (2014) 1429--1443.

\bibitem{gab-roi}
P. Gabriel, A.V. Roiter, Representations of Finite-Dimensional Algebras, Springer-Verlag, 1997.

\bibitem{gan2}
F.R. Gantmacher,  The Theory of Matrices, Vol. 2, AMS Chelsea Publishing, Providence, RI, 1998.

\bibitem{horn-ser}
R.A. Horn, V.V. Sergeichuk, A regularization algorithm for matrices of bilinear and sesquilinear forms, Linear Algebra Appl. 412 (2006) 380--395.

\bibitem{koz}
I.K. Kozlov, An elementary proof of the Jordan--Kronecker theorem,
Math. Notes 94 (2013) 885--896.

\bibitem{mac}
C.C. Mac Duffee, The Theory of Matrices,
Verlag Von Julius Springer, Berlin, 1933.

\bibitem{naz+roi}
L.A. Nazarova, A.V. Roiter,
Finitely generated modules over
a dyad of two local Dedekind
rings, and finite groups which
possess an abelian normal
divisor of index $p$, Izv. Akad. Nauk
SSSR Ser. Mat. 33 (1969)
65--89 (in
Russian).

\bibitem{naz+roi+ser}
L.A. Nazarova, A.V. Roiter, V.V. Sergeichuk, V.M. Bondarenko, Application of modules over a dyad for the classification of finite $p$-groups possessing an abelian subgroup of index $p$ and of pairs of mutually annihilating operators, J. Soviet Math. 3 (5) (1975) 636--654.

\bibitem{roi} A.V. Roiter, Bocses with involution, in:
Representations and Quadratic Forms, Akad. Nauk Ukrain. SSR, Inst. Mat., Kiev,  1979, 124--128 (in Russian).

\bibitem{sch}
R. Scharlau,  Paare alternierender Formen, Math. Z. 147  (1976)  13--19.

\bibitem{ser1} V.V. Sergeichuk, Classification problems for system of forms
and linear mappings, Math. USSR-Izv. 31 (1988) 481--501.

\bibitem{ser_cycl}
V.V. Sergeichuk, Computation of canonical matrices for chains and cycles of linear mappings, Linear
Algebra Appl. 376 (2004) 235--263.


\bibitem{thom} R.C. Thompson, Pencils of complex and real symmetric and skew
matrices, Linear Algebra Appl. 147 (1991) 323--371.

\bibitem{van_door}
P. Van Dooren, The computation
of Kronecker's canonical form of
a singular pencil,  Linear
Algebra Appl.  27 (1979)
103--140.

\bibitem{var}
A. Varga, Computation of Kronecker-like forms of periodic matrix pairs,
in Proceedings of the 16th International Symposium on the Mathematical
Theory of Networks and Systems, Leuven, Belgium, 2004, pp. 5--9.

\bibitem{wil}
J. Williamson, The conjunctive equivalence of pencils of hermitian and anti-hermitian matrices, Amer. J. Math. 59 (1937) 399--413.

\end{thebibliography}
\end{document}